\def\DZ{{Z\kern-.40em Z}}
\def\DN{{I\kern-.13em  N}}
\def\DR{{I\kern-.13em R}}
\def\DK{{I\kern-.13em K}}
\def\DC{{\kern.24em \vrule width.05em height1.5ex depth-.05ex \kern-.30em C}}
\def\DQ{{\kern.24em \vrule width.05em height1.5ex depth-.05ex \kern-.30em Q}}
\def\qed{\framebox(3,3){}}
\def\be{\begin{equation}}
\def\ee{\end{equation}}
\def\bear{\begin{eqnarray}}
\def\eear{\end{eqnarray}}
\def\eqref#1{(\ref{#1})}
\newtheorem{lemma}{Lemma}
\newtheorem{problem}{Problem}
\newenvironment{proof}
{{\it Proof.} } {\hfill \large \qed\bigskip\par}
\begin{document}
\begin{frontmatter}
\title{A note on the state-space realizations equivalence}

\author[First]{P. Lopes dos Santos}
\author[Second]{J. A. Ramos}
\author[Third]{ T-P Azevedo Perdico\'{u}lis}
\author[Fourth]{J. L. Martins de Carvalho}
\address[First]{FEUP, Portugal (e-mail:  pjsantos@fe.up.pt)}
\address[Second]{Farquhar College of Arts and Sciences,USA (e-mail:  jr1284@nova.edu)}
\address[Third]{ISR---Coimbra \&  UTAD, Portugal (e-mail:  tazevedo@utad.pt)}
\address[Fourth]{FEUP, Portugal (e-mail:  jmartins@fe.up.pt)}

\maketitle

\begin{abstract}
In this note we resolve the problem of  getting a state-space realization \emph{compatible} with the deterministic state-space realization
and the filtering problem.
\end{abstract}
\end{frontmatter}

\section{}\label{}
\begin{problem}
Given $A_0 \in\mathbb{R}^{n_x\times n_x},\;C_0 \in \mathbb{R}^{1 \times n_x},\; A\in\mathbb{R}^{1\times n_x}$ and $C \in\mathbb{R}^{n_x\times n_x},$
 such that
\begin{eqnarray}
& & \left(A_0,C_0\right) \text{ and } \left(A,C\right)\qquad \text{are observable}, \label{condiction1}
\\
&& A_0=T^{-1}A T  \label{condiction2},\\
&& C_0=C T  \label{condiction3},
 \end{eqnarray}
  find the  similarity matrix $T.$
 \end{problem}

 Rewrite \eqref{condiction2} in form
\begin{equation}\label{rlc25}
    TA_0=AT
\end{equation}
and after vectorization, we obtain:
\begin{eqnarray}
    \label{rlc26}
    \text{Vec}\left(TA_0\right)&=&\left(A_0^T \otimes I_{n_x} \right)\text{Vec}\left(T\right),\\
    \label{rlc27}
    \text{Vec}\left(AT\right)&=&\left(I_{n_x}\otimes A \right)\text{Vec}\left(T\right).
\end{eqnarray}
Equalizing  \eqref{rlc26} to \eqref{rlc27}:
\begin{eqnarray}\label{rlc28}
 &   \left(A_0^T \otimes I_{n_x} \right)\text{Vec}\left(T\right)= \left(I_{n_x}\otimes A \right)\text{Vec}\left(T\right) \Leftrightarrow\\
 \Leftrightarrow  &      \left(A_0^T \otimes I_{n_x}-I_{n_x}\otimes A \right)\text{Vec}\left(T\right) =0 \nonumber \\
\implies &\text{Vec}\left(T\right) \in\ker \left(A_0^T \otimes I_{n_x}-I_{n_x}\otimes A \right) .
\end{eqnarray}

\begin{lemma}
 $M_1,\;M_2 \in \mathbb{R}^{n\times n}$ are two similar  matrices, i.e., \begin{equation}\label{rlc29}
    M_2=T^{-1}M_1T,\qquad T\in\mathbb{R}^{n\times n}.
\end{equation}

Then
\begin{eqnarray}\label{rlc30}
  &  \text{rank}\left(I_{n_x} \otimes M_2-M_1^T\times I_{n_x} \right)={n_x}^2-{n_x} \Leftrightarrow \\
    \Leftrightarrow & \text{dim}\left\{\ker\left(I_{n_x} \otimes M_2-M_1^T\times I_{n_x} \right)\right\}={n_x}.
\end{eqnarray}
\end{lemma}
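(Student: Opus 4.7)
The plan is to derive the equivalence in \eqref{rlc30} directly from the rank--nullity theorem, since (as worded) the claim is a purely dimensional identity that does not require the similarity hypothesis at all. The only structural fact needed is that the matrix inside the rank and kernel operators is square, which is automatic from the Kronecker-product sizes.

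First, I would record the dimensions: each of $I_{n_x}\otimes M_2$ and $M_1^T\otimes I_{n_x}$ is a Kronecker product of two $n_x\times n_x$ matrices, hence of size $n_x^2\times n_x^2$, and consequently their difference is also $n_x^2\times n_x^2$. It therefore represents a linear endomorphism of $\mathbb{R}^{n_x^2}$.

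Next, I would apply the rank--nullity theorem to this square linear map to obtain
$$\text{rank}\bigl(I_{n_x}\otimes M_2 - M_1^T\otimes I_{n_x}\bigr) + \dim\ker\bigl(I_{n_x}\otimes M_2 - M_1^T\otimes I_{n_x}\bigr) = n_x^2.$$
Rearranging this single identity shows that $\text{rank}(\cdot)=n_x^2-n_x$ holds if and only if $\dim\ker(\cdot)=n_x$, which is exactly \eqref{rlc30}.

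I do not expect any real obstacle: the lemma, read literally as the displayed biconditional, is a one-line consequence of rank--nullity on a square matrix. The similarity hypothesis \eqref{rlc29} plays no role in establishing the equivalence; it would only be relevant if one sought to pin down the common value of either side separately (for instance, to certify that the rank actually equals $n_x^2-n_x$, which would require spectral/Jordan-structure arguments on $M_1$ and $M_2$). Since the statement asks only for the biconditional, no such additional machinery is needed.
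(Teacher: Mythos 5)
Your rank--nullity argument is valid for what the lemma literally displays: the matrix $I_{n_x}\otimes M_2-M_1^T\otimes I_{n_x}$ is square of size $n_x^2\times n_x^2$, so $\text{rank}(\cdot)=n_x^2-n_x$ holds if and only if $\dim\ker(\cdot)=n_x$, and the similarity hypothesis \eqref{rlc29} is indeed irrelevant to that biconditional. But this reads the statement too literally, and the resulting proof is too weak to do the lemma's job in the paper. The paper's own proof, and the use made of the lemma immediately afterwards, show that the intended content is the \emph{unconditional} claim that under \eqref{rlc29} the common value is actually attained: $\text{rank}\left(I_{n_x}\otimes M_2-M_1^T\otimes I_{n_x}\right)=n_x^2-n_x$, equivalently $\dim\ker = n_x$. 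Indeed, right after the lemma the paper asserts \eqref{rlc46}, $\dim\left\{\ker\left(A_0^T\otimes I_{n_x}-I_{n_x}\otimes A\right)\right\}=n_x$, as a standalone fact, and uses it to choose a basis $\text{vec}(U_1),\dots,\text{vec}(U_{n_x})$ of that kernel in which $T$ is expanded. A pure biconditional between two unproved statements gives nothing there. You flagged this yourself ("it would only be relevant if one sought to pin down the common value") and deliberately set it aside; that set-aside part is precisely the substance of the lemma.

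The paper's proof supplies exactly that missing part: defining $M=I_{n_x}\otimes M_2-M_1^T\otimes I_{n_x}$, it conjugates $M$ by $T_d\otimes I_{n_x}$, where $T_d$ brings $M_1^T$ to Jordan form $\Lambda$, reducing the problem to the rank of $I_{n_x}\otimes M_2-\Lambda\otimes I_{n_x}$; this is block structured, each Jordan block of size $n_i$ contributing rank $(n_x-1)n_i$, and summing over blocks gives $\sum_i (n_x-1)n_i=n_x^2-n_x$. So to replace that proof you must actually compute the rank (via the Jordan/Sylvester structure), not merely relate the two sides of \eqref{rlc30}. As an aside, your observation that similarity alone is doing no work cuts deeper than you note: the intended claim is in fact \emph{false} under similarity alone --- take $M_1=M_2=I_{n_x}$ with $n_x\ge 2$, where the kernel is all of $\mathbb{R}^{n_x^2}$ --- and the paper's block-rank count silently assumes each $M_2-\lambda_i I_{n_x}$ has rank exactly $n_x-1$, i.e., that the matrices are non-derogatory. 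In the paper's application this is guaranteed by observability \eqref{condiction1} with a single output row $C_0\in\mathbb{R}^{1\times n_x}$, which forces $A_0$ to be cyclic; any correct repair of the lemma needs that hypothesis, and a correct proof needs the spectral machinery you chose to avoid.
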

\begin{proof}
Consider $T_d \in\mathbb{R}^{{n_x} \times {n_x}}$ a singular matrix such that
\begin{equation}\label{rlc31}
    T_d^{-1}M_1^TT_d=\Lambda=
    \begin{bmatrix}
    \Lambda_1 & 0 & \cdots & 0 \\
     0 & \Lambda_2 & \cdots & 0 \\
     \vdots & \vdots & \ddots & 0 \\
     0 & 0 & \cdots & \Lambda_k
    \end{bmatrix},
\end{equation}
where $\Lambda_k$ are the blocks of a jordan canonical form of $M_1.$
Define $M:=\left(I_{n_x} \otimes M_2-M_1^T\otimes I_{n_x}\right) \in \mathbb{R}^{{n_x}^2\times {n_x}^2}.$ Multiply $M$  by $T_d^{-1} \otimes I_{n_x}$ on the left side and  by $T_d \otimes I_{n_x}$ on the right side. As
$    \text{rank}\left(T_d^{-1} \otimes I_{n_x}\right)=\text{rank}\left(T_d \otimes I_{n_x}\right)={n_x}^2 $
then
 $   \text{rank}\left(M\right)=\text{rank}\left\{\left(T_d^{-1} \otimes I_{n_x}\right) M \left(T_d \otimes I_{n_x}\right)\right\}.$
On the other side,
\begin{eqnarray}
&& \left(T_d^{-1} \otimes I_{n_x}\right) M \left(T_d \otimes I_{n_x}\right)\nonumber \\
  &=&  \left(T_d^{-1} \otimes I_{n_x}\right)\left(I_{n_x} \otimes M_2-M_1^T\otimes I_{n_x}\right)\left(T_d \otimes I_{n_x}\right) \nonumber \\
    &=&
    \left(T_d^{-1} \otimes I_{n_x}\right)\left(I_{n_x} \otimes M_2\right)\left(T_d \otimes I_{n_x}\right)-\nonumber \\
    & &
    - \left(T_d^{-1} \otimes I_{n_x}\right)\left(M_1^T\otimes I_{n_x}\right)\left(T_d \otimes I_{n_x}\right). \label{rlc34}
\end{eqnarray}
Define
$ T_d^{-1}:=
 \begin{bmatrix}
 \tau_{11} & \tau_{12} & \cdots & \tau_{1n} \\
 \tau_{21} & \tau_{22} & \cdots & \tau_{2n} \\
 \vdots    & \vdots    & \vdots & \vdots     \\
 \tau_{n1} & \tau_{n2} & \cdots & \tau_{nn}
 \end{bmatrix}$ and
using the

Kronecker product definition, hence
\begin{equation}\label{rlc36}
T_d^{-1} \otimes I_{n_x}=
\begin{bmatrix}
 \tau_{11}I_{n_x} & \tau_{12}I_{n_x} & \cdots & \tau_{1n}I_{n_x} \\
 \tau_{21}I_{n_x} & \tau_{22}I_{n_x} & \cdots & \tau_{2n}I_{n_x} \\
 \vdots    & \vdots    & \vdots & \vdots     \\
 \tau_{n1}I_{n_x} & \tau_{n2}I_{n_x} & \cdots & \tau_{nn}I_{n_x}
 \end{bmatrix}
\end{equation}
and
\begin{equation}\label{rlc37}
    I_{n_x} \otimes M_2=
    \begin{bmatrix}
        M_2 & 0 & \cdots & 0 \\
        0  & M_2 & \cdots & 0 \\
        \vdots & \vdots & \ddots & \vdots \\
        0 & 0 & \cdots & M_2
    \end{bmatrix}.
\end{equation}
Consequently
\begin{eqnarray*}\label{rlc38}
  &&  \left(T_d^{-1} \otimes I_{n_x}\right)\left(I_{n_x} \otimes M_2\right)=\\
 &=&   \begin{bmatrix}
       \tau_{11}I_{n_x} & \tau_{12}I_{n_x} & \cdots & \tau_{1n}I_{n_x} \\
       \tau_{21}I_{n_x} & \tau_{22}I_{n_x} & \cdots & \tau_{2n}I_{n_x} \\
       \vdots    & \vdots    & \vdots & \vdots     \\
       \tau_{n1}I_{n_x} & \tau_{n2}I_{n_x} & \cdots & \tau_{nn}I_{n_x}
    \end{bmatrix}
    \begin{bmatrix}
        M_2 & 0 & \cdots & 0 \\
        0  & M_2 & \cdots & 0 \\
        \vdots & \vdots & \ddots & \vdots \\
        0 & 0 & \cdots & M_2
    \end{bmatrix}  \\[10mm]
    &=&
    \begin{bmatrix}
       \tau_{11}M_2 & \tau_{12}M_2 & \cdots & \tau_{1n}M_2 \\
       \tau_{21}M_2 & \tau_{22}M_2 & \cdots & \tau_{2n}M_2 \\
       \vdots    & \vdots    & \vdots & \vdots     \\
       \tau_{n1}M_2 & \tau_{n2}M_2 & \cdots & \tau_{nn}M_2
    \end{bmatrix} =T_d^{-1} \otimes M_2
\end{eqnarray*}
and
\begin{eqnarray}
 && \left(T_d^{-1} \otimes I_{n_x}\right)\left(I_{n_x} \otimes M_2\right)\left(T_d \otimes I_{n_x}\right) \nonumber \\
  &=&
    \left(T_d^{-1} \otimes M_2\right)\left(T_d \otimes I_{n_x}\right.\nonumber\\
&=&  \left(T_d^{-1}T_d \right) \otimes M_2=I_{n_x} \otimes M_2.  \label{rlc39}
\end{eqnarray}
Using the Kronecker product properties and   $T_d$ definition:
\begin{eqnarray}\label{rlc42}
 &&    \left(T_d^{-1} \otimes I_{n_x}\right)\left(M_1^T\otimes I_{n_x}\right)\left(T_d \otimes I_{n_x}\right) \nonumber\\
    &=&
    \left[\left(T_d^{-1}M_1^T\right) \otimes I_{n_x}\right]\left(T_d^{-1} \otimes I_{n_x}\right) \nonumber \\
    &=&
    \left(T_d^{-1}M_1^TT_d^{-1}M_1^T\right) \otimes I_{n_x}=\Lambda \otimes I_{n_x}. \label{rlc41}
\end{eqnarray}
From \eqref{rlc34}, \eqref{rlc39} and  \eqref{rlc42} and the definitions of  $\Lambda$ and the  Kronecker product, we obtain:
\begin{eqnarray}\label{rlc43}
&& \left(T_d^{-1} \otimes I_{n_x}\right) M \left(T_d \otimes I_{n_x}\right)\nonumber\\
&=&I_{n_x} \otimes M_2-\Lambda \otimes I_{n_x} \nonumber\\
    &=&
  {\rm diag}  \left(  {\rm diag} \left(M_2 \right)_{j=1}^{n_i}-\Lambda_i\otimes I_{n_x}   \right)_{i=1}^k .
\end{eqnarray}
As $  {\rm diag} \left(M_2 \right)_{j=1}^{n_i}-\Lambda_i\otimes I_{n_x} $ is equal to
$$
\left[ \begin{array}{cccccc}
M_2 -\Lambda_i I_{n_x} & I_{n_x} & 0 & \cdots & \cdots& 0 \\
0& M_2 -\Lambda_i I_{n_x} & I_{n_x}& \cdots & \cdots & 0 \\
\vdots &  \ddots  &    & \ddots& & \vdots \\
\vdots &  &  \ddots   & & \ddots & \vdots \\
\vdots&  &  & & &I_{n_x} \\
0& 0  & 0 &\cdots &\cdots & M_2 -\Lambda_i I_{n_x}
 \end{array}\right]$$
 whose rank is $({n_x}-1)n_i.$  Hence
 \begin{equation}\label{Kalman}
 {\rm rank} \left(  {\rm diag}  \left(  {\rm diag} \left(M_2 \right)_{j=1}^{n_i}-\Lambda_i\otimes I_{n_x}   \right)_{i=1}^k  \right) = \sum_{i=1}^k ({n_x}-1)n_i
 \end{equation}
and  $\displaystyle  \sum_{i=1}^k ({n_x}-1)n_i= ({n_x}-1)  \sum_{i=1}^kn_i =  ({n_x}-1) {n_x} = n_x^2-n_x,$ because the sum of the dimension of the Jordan blocks of a matrix is the dimension of the matrix, i.e., $ \sum_{i=1}^k n_i = n_x.$
As a result ${\rm rank}\left(M\right)=n_x^2-n_x.$
\end{proof}
Back to our problem, we know that:
\begin{equation}\label{rlc45}
\text{Vec}\left(T\right) \in\ker \left(A_0^T \otimes I_{n_x}-I_{n_x}\otimes A \right)
\end{equation}
and
\begin{equation}\label{rlc46}
    \text{dim}\left\{\ker \left(A_0^T \otimes I_{n_x}-I_{n_x}\otimes A\right)\right)\}=n_x.
\end{equation}
Consider $U_i\in\mathbb{R}^{n_x \times n_x}$  matrices such that $$\left\{\text{vec}\left(U_1\right),\;\text{vec}\left(U_2\right),\dots,\;\text{vec}\left(U_{n_x}\right)\right\}$$ is a  basis of  $\ker \left(A_0^T \otimes I_{n_x}-I_{n_x}\otimes A\right)\}.$ Then
\begin{equation}\label{rlc47}
    T=\alpha_1 U_1+\alpha_2 U_2+\dots +\alpha_{n_x} U_{n_x},
\end{equation}
where $\alpha_i \in\mathbb{R},\;i=1,\dots,n_x.$ To calculate  $\alpha_i,\;i=1,\dots,n_x,$ we only have to solve equation  \eqref{rlc47}.  Substituing  $T$ according to \eqref{rlc47} into \eqref{condiction3}:
\begin{eqnarray}\label{rlc48}
    C_0&=&C\left(\alpha_1 U_1+\alpha_2 U_2+\dots +\alpha_{n_x} U_{n_x}\right) \nonumber \\
    &= &
    \begin{bmatrix}
       \alpha_1 & \alpha_2 & \cdots & \alpha_{n_x}
    \end{bmatrix}
    \begin{bmatrix}
       C U_1 \\ C U_2 \\ \vdots \\ C U_{n_x}
    \end{bmatrix}.
\end{eqnarray}
From the observability of $\left(A_0,C_0\right),$ $T$ is a unique nonsingular matrix.
As a result,
$\begin{bmatrix} C U_1 \\ C U_2 \\ \vdots \\ C U_{n_x}\end{bmatrix}$ is of full rank and
 $$   \begin{bmatrix}
       \alpha_1 & \alpha_2 & \cdots & \alpha_{n_x}
    \end{bmatrix}=
    C_0
    \left(
    \begin{bmatrix}
       C U_1 \\ C U_2 \\ \vdots \\ C U_{n_x}
    \end{bmatrix}
    \right)^{-1}_{.}$$

Once $T$ is known and since is unique, having the state space realization  $\left(A,B,C,D\right)$ such that the conditions of Problem~1 are met, it is also possible to calculate
    \begin{eqnarray}
       B_0&= &T^{-1}B, \\
    D_0&= &D .
     \end{eqnarray}
In this manner,  we obtain another minimal state-space realization, $\left(A_0,B_0,C_0,D_0\right),$ for the same LTI.

\end{document}